\begin{document}

\title{Permuted preconditioning for extended saddle point problem arising from Neumann boundary control
\thanks{This work was supported by the National Natural Science Foundation of China (No. 12001022).}
}

\author{Chaojie Wang$^{1\ast}$ \and Xuan Zhang$^1$ \and Xingding Chen$^1$}

\institute{Chaojie Wang($\ast$: Corresponding author)\\
              \email{wangcj2019@btbu.edu.cn}\\
            Xuan Zhang\\
              \email{ikeyan$\_$btbu@163.com}\\
            Xingding Chen \\
              \email{chenxd@th.btbu.edu.cn}\\
\at         1.~School of Mathematics and Statistics, Beijing Technology and Business University, Beijing 100048, China
}

\date{Received: date / Accepted: date}

\maketitle

\begin{abstract}
In this paper, a new block preconditioner is proposed for the saddle point problem arising from the Neumann boundary control problem. In order to deal with the singularity of the stiffness matrix, the saddle point problem is first extended to a new one by a regularization of the pure Neumann problem. Then after row permutations of the extended saddle point problem, a new block triangular preconditioner is constructed based on an approximation of the Schur complement. We analyze the eigenvalue properties of the preconditioned matrix and provide eigenvalue bounds. Numerical results illustrate the efficiency of the proposed preconditioning method.
\end{abstract}

\section{Introduction}
\label{intro}
\hl{The PDE-constrained optimization problem plays an important role in many applications} \cite{mrms}-\cite{ftr}. In recent years, a lot of preconditioning methods have been presented. It has been attracted extensive attentions for preconditioning the saddle point problem arising from the PDE-constrained optimization problem with different PDE constraints, boundary conditions and control types [00]-[00]. In \cite{kmb1}, Mardal, Nielsen, and Nordaas proposed certain block diagonal preconditioners for PDE-constrained optimization with limited observations. These preconditioners were shown to be robust with respect to both the mesh size and the regularization parameter. In \cite{jsw1}, Schoberl and Zulehner constructed a kind of symmetric indefinite preconditioners for control problems with elliptic state equations and distributed control. Specially, the $(1,1)$ block of the corresponding saddle point problems is only positive on the kernel of the $(2,1)$ block. For saddle point problems arising from the PDE-constrained optimization problems with a hyperbolic constraint, Benzi, Haber, and Taralli constructed the block triangular preconditioners with diagonal perturbations of the approximate Hessian \cite{mbe2}. As a comparison of preconditioned Krylov subspace iteration methods, Axelsson, Farouq, and Neytcheva constructed a series of block matrix preconditioners for optimal control problems with Possion and convection-diffusion control, respectively \cite{oas1}. Besides, other preconditioning techniques based on domain decomposition and norm equivalence (see, e.g., Heinkenschloss and Nguyen \cite{mhh1},  Arioli, Kourounis, and Loghin \cite{mad2}), operator methods (see, e.g., Zulehner \cite{wz1}, Elvetun and Nielsen \cite{oeb1}, Gergelits, Mardal, and Nielsen, et al. \cite{tgk1},  Kuchta, Mardal, and Mortensen \cite{mkk1}) have also been investigated to help to solve the PDE-constrained optimization problem more efficiently. In this paper, we focus on building efficient precondioners for the  Neumann boundary control problems.

In \cite{trh1}, Rees, Dollar and Wathen approximated the Schur complement by dropping one of its terms and used this strategy to construct block diagonal preconditioners for the pure Neumann boundary control problem and the distributed control problem with Dirichlet boundary condition, Neumann boundary condition and mixed boundary condition, respectively. The method showed performance independence of the mesh size for the distributed control problem, while the resulting iteration number for the pure Neumann boundary control problem varied as the mesh size \hl{decreased.} Meanwhile, Rees and Stoll also used this strategy to construct block-triangular preconditioners and applied them coupled with the Bramble-Pasciak CG method to saddle point problems arising in PDE-constrained optimization \cite{trm1}. As for the pure Neumann boundary control problem, Pearson and Wathen \cite{jpm3} built a preconditioner on the basis of a matching strategy, which was originally proposed by them for constructing preconditioners for the distributed Poisson control problem \cite{jpa2}. For the distributed Poisson problem, the matching strategy renders the eigenvalues of the preconditioned matrix  bounded by certain intervals independent of the mesh size and the regularization parameter. However, this eigenvalue property does not hold for the pure Neumann boundary control problem and the iteration number was found to be mesh size dependent. 

In the pure Neumann boundary control problem, the singularity of the stiffness matrix is sometimes an obstacle to the construction of efficient preconditioners. In order to tackle this issue, some additional constraints are commonly added on the candidate solution of the state variable. One popular way is to specify the value of the candidate state at a node \cite{pbr1}. In this way, the linear system with the modified stiffness matrix as coefficient matrix can be solved by direct or iterative methods. 
\hl{Besides, there are many papers that deal with the regularization of the saddle point problem and its impact on control or identification problem} \cite{aojw}-\cite{bam2}. On this issue, Bochev and Lehoucq \cite{pbr1} proposed a regularized form of the pure Neumann problem based on a saddle-point Lagrangian formulation and optimization settings. And the solution of the regularized problem is known to be consistent with the original Neumann problem. The idea of this technique will be used in this paper to overcome the singularity of the stiffness matrix.

The contribution of this paper is that a new preconditioning method is proposed for the saddle point problem arising from the pure Neumann boundary control problem. Specifically, in this paper we overcome the singularity of the stiffness matrix by adding a global condition to the candidate solution of the state, which yields a regularized form of the original problem. Based on this strategy, we obtain an extended form of the original saddle point problem in which the extended stiffness matrix is nonsingular. After row permutations of the extended saddle point problem, we build a new block triangular preconditioner based on an approximation of the Schur complement and the preconditioning theory for block matrices. We provide a spectral analysis of the preconditioned matrix. Numerical results show that the proposed preconditioning method outperforms other methods.

The remainder of this paper is organized as follows. In Section 2, we introduce the discretization of the pure Neumann boundary control problem and the corresponding saddle point problem. We also give a brief description of some related preconditioning theories and previous work on this issue. In Section 3, we regularize the pure Neumann problem and derive an extended form of the original saddle point problem. In Section 4, a new preconditioning method for the extended saddle point problem is presented and the eigenvalue properties of the preconditioned matrix is analyzed. In Section 5, some numerical experiments are conducted to illustrate the effectiveness of the proposed preconditioning method. Finally, we draw some conclusions in Section 6.

\section{Problem formulation}\label{pf}
In this paper, we consider the boundary control problem constrained by the Poisson problem with pure Neumann boundary conditions, which is of the following form
\begin{equation}
\label{eq2-1}
\min_{y,u}\frac{1}{2}||y-y_{d}||^{2}_{L_{2}(\Omega)}+\frac{\beta}{2}||u||^{2}_{L_{2}(\partial\Omega)},
\end{equation}
subject to
\begin{equation}
\label{eq2-2}
\left\{
\begin{array}{rclcl}
-\nabla^{2}y&=&f                  &\text{in}&\Omega,\\[1mm]
\displaystyle\frac{\partial y}{\partial n}&=&u & \text{on}&\partial\Omega,
\end{array}
\right.
\end{equation}
where $\Omega\subset\mathbb{R}^{2}$ with boundary $\partial\Omega$, $y$ is the state variable, $u$ is the control variable, $f$ is a known source term, $y_{d}$ is the desired state and $\beta$ is a positive regularization parameter. This PDE-constrained problem is supposed to derive a solution $y$ that approaches $y_{d}$ as close as possible. We note that the \hl{state} equation (\ref{eq2-2}) has a solution if and only if
\begin{equation}
\label{eq2-3}
\int_{\Omega}f d\Omega+\int_{\partial\Omega}u ds=0.
\end{equation}
Under this compatibility condition, the optimal control problem (\ref{eq2-1})-(\ref{eq2-2}) has a unique solution (see Theorem 3.1 in \cite{mhh1} for more details).

The weak formulation of the optimal control problem (\ref{eq2-1})-(\ref{eq2-2}) is: find a solution $(y,u)\in H^{1}(\Omega)\times L_{2}(\partial\Omega)$ to the problem \cite{hed1}
\begin{equation}
\label{eq2-4}
\left\{
\begin{array}{rl}
\min&\displaystyle\frac{1}{2}||y-y_{d}||^{2}_{L_{2}(\Omega)}+\displaystyle\frac{\beta}{2}||u||^{2}_{L_{2}(\partial\Omega)},\\[3mm]
\text{s.t.}&\displaystyle\int_{\Omega}\nabla y\cdot\nabla z d\Omega-\displaystyle\int_{\partial\Omega}u\cdot zds=\displaystyle\int_{\Omega}f\cdot z d\Omega, \quad \forall z\in H^{1}(\Omega),
\end{array}
\right.
\end{equation}
where $L_{2}(\Omega)=\{v:\int_{\Omega}|v|^{2}d\Omega<\infty\}$, $H^{1}(\Omega)=\{v: v\in L_{2}(\Omega), \partial v/\partial x_{i}\in L_{2}(\Omega)\}$.

\subsection{Discretization}\label{section1-1}
Let $V^{h}\subset H^{1}(\Omega)$ be an $n$ dimensional trail space with basis $\{\phi_{1},\cdots,\phi_{n}\}$. Then any $y_{h}\in V^{h}$ can be represented as
$$y_{h}=\sum^{n}_{j=1}Y_{j}\phi_{j},$$
with the coefficient vector $\bold{y}=(Y_{1},\cdots,Y_{n})^{T}$.
Let $W^{h}\subset L_{2}(\partial\Omega)$ be an $m_{B}$ dimensional test space with basis $\{\psi_{1},\cdots,\psi_{m_{B}}\}$ with nonzero support on $\partial\Omega$. Then any $u_{h}\in W^{h}$ can be expressed as
$$u_{h}=\sum^{m_{B}}_{j=1}U_{j}\psi_{j},$$
with the coefficient vector $\bold{u}=(U_{1},\cdots,U_{m_{B}})^{T}$. Normally, both the \emph{optimize-then-discretize} strategy and \emph{discretize-then-optimize} strategy can be utilized to discretize the PDE-constrained optimization problem. As for the optimal control problem constrained by the Poisson equation, these two strategies result in the same linear system since the Laplacian operator is self-adjoint \cite{yq1}. Here the \emph{discretize-then-optimize} strategy is utilized combined with the finite element method and the Lagrange multiplier method to solve the pure Neumann boundary control problem (\ref{eq2-1})-(\ref{eq2-2}).

Using the Galerkin finite element method, the weak formulation (\ref{eq2-4}) is discretized as: find a solution $(y_{h},u_{h})\in V_{h}\times W_{h}$ to the problem
\begin{equation}
\label{eq2-5}
\left\{
\begin{array}{cl}
\min&\displaystyle\frac{1}{2}||y_{h}-y_{d}||^{2}_{L_{2}(\Omega)}+\displaystyle\frac{\beta}{2}||u_{h}||^{2}_{L_{2}(\partial\Omega)},\\[3mm]
\text{s.t.}&\displaystyle\int_{\Omega}\nabla y_{h}\cdot\nabla z_{h} d\Omega-\displaystyle\int_{\partial\Omega}u_{h}\cdot z_{h} ds=\displaystyle\int_{\Omega}f\cdot z_{h} d\Omega, \quad \forall z_{h}\in V^{h}(\Omega).
\end{array}
\right.
\end{equation}
The corresponding matrix form is: find a solution $(\bold{y},\bold{u})\in\mathbb{R}^{n}\times\mathbb{R}^{m_{B}}$ to the problem
\begin{equation}
\label{eq2-6}
\left\{
\begin{array}{cl}
\min&\displaystyle\frac{1}{2}\bold{y}^{T}M\bold{y}-\bold{y}^{T}\bold{b}+\displaystyle\frac{\beta}{2}\bold{u}^{T}M_{b}\bold{u},\\[3mm]
\displaystyle\text{s.t.}& K\bold{y}-N_{b}\bold{u}=\bold{f},
\end{array}
\right.
\end{equation}
where the mass matrix $M$ and the stiffness matrix $K$ have the elements:
$$M_{ij}=\int_{\Omega}\phi_{i}\phi_{j} d\Omega, \quad K_{ij}=\int_{\Omega}\nabla\phi_{i}\nabla\phi_{j} d\Omega, i, j=1,\cdots,n,$$
the boundary mass matrix $M_{b}$ and the matrix $N_{b}$ have the elements:
$$[M_{b}]_{ij}=\int_{\partial\Omega}\psi_{i}\psi_{j} d\Omega, \quad [N_{b}]_{kj}=\int_{\partial\Omega}\phi_{k}\psi_{j} ds, i, j=1,\cdots,m_{B}, k=1,\cdots,n, $$
the vectors $\bold{b}$ and $\bold{f}$ have the elements:
$$b_{j}=\int_{\Omega}y_{d}\phi_{j} d\Omega,\quad f_{j}=\int_{\Omega}f\phi_{j} d\Omega, j=1,\cdots,n.$$

The Lagrange functional of the optimization problem (\ref{eq2-6}) takes the form
$$
L(\bold{y},\bold{u},\bold{p})=\frac{1}{2}\bold{y}^{T}M\bold{y}-\bold{y}^{T}\bold{b}+\frac{\beta}{2}\bold{u}^{T}M_{b}\bold{u}+\bold{p}^{T}(K\bold{y}-N_{b}\bold{u}-\bold{f}),
$$
where $\bold{p}=(P_{1},\cdots,P_{n})$ and $p_{h}=\sum^{n}_{j=1}P_{j}\phi_{j}$ is the finite element approximation of the Lagrange multiplier $p$. Differentiating $L(\bold{y},\bold{u},\bold{p})$ with respect to variables $\bold{y},\bold{u},\bold{p}$, we obtain the first order optimality conditions
\begin{equation}
\label{eq2-7}
\renewcommand\arraystretch{1.2}\left(
\begin{array}{cc|c}
M        & \bold{0}    & K\\
\bold{0} & \beta M_{b} & -N^{T}_{b}\\
\hline
K        & -N_{b}      & \bold{0}
\end{array}
\right)
\left(
\begin{array}{c}
\bold{y}\\
\bold{u}\\
\bold{p}
\end{array}
\right)
=
\left(
\begin{array}{c}
\bold{b}\\
\bold{0}\\
\bold{f}
\end{array}
\right).
\end{equation}
\hl{The weak formulation together with finite element discretization give numerical solutions as approximations of the optimal control problem. This is a standard approximation procedure in PDE-constrained optimization. And we can refer to \cite{mrms}, \cite{ftr} for more details about the convergence property of this discretization scheme. In this paper, we will focus on preconditioning for the resulting linear system.} We note that the stiffness matrix $K$ here is singular. As partitioned in (\ref{eq2-7}), the coefficient matrix is of saddle point structure.

Next we briefly describe some basic preconditioning theories that can be used for the saddle point problem. 

\subsection{Preconditioning theory for block matrices}\label{sec2-2}
Consider the block matrix:
\begin{equation}
\label{eq2-8}
\mathcal{A}=\left(
\begin{array}{cc}
A & B\\
C & \bold{0}
\end{array}
\right)
\end{equation}
with nonsingular submatrix $A$. The Schur complement with respect to $A$ is $S=-CA^{-1}B$. When $C=B^{T}$, the block matrix $\mathcal{A}$ in (\ref{eq2-8}) has the saddle point structure. \hl{In this case, $S=-B^{T}A^{-1}B$ is nonsingular if $B$ has full rank.} As for the block matrix $\mathcal{A}$, the following block diagonal matrix and block upper triangular matrices
$$
\mathcal{P}_{1}=\left(
\begin{array}{cc}
A        & \bold{0}\\
\bold{0} & -S
\end{array}
\right),
\mathcal{P}_{2}=\left(
\begin{array}{cc}
A        & B\\
\bold{0} & \pm S
\end{array}
\right)
$$
are known as its optimal preconditioners. Murphy et al. \cite{mmg1} showed that the preconditioned matrix $\mathcal{P}^{-1}_{1}\mathcal{A}$ has the minimal polynomial $p_{1}(t)=t(t-1)(t^{2}-t-1)$ with degree 4 and the preconditioned matrix $\mathcal{P}^{-1}_{2}\mathcal{A}$ has the minimal polynomial $p_{2}(t)=(t\pm1)(t-1)$ with degree 2. Ipsen \cite{ii1} generalized the results to the more general case:
\begin{equation}
\label{eq2-9}
\mathcal{A}=\left(
\begin{array}{cc}
A & B\\
C & D
\end{array}
\right),
\end{equation}
where $A$ is nonsingular and $D\neq0$. Accordingly, the Schur complement with respect to $A$ is $S=D-CA^{-1}B$. The Schur complement is nonsingular as $\mathcal{A}$ is assumed to be nonsingular. In this case, Ipsen showed that the preconditioned matrix $\mathcal{P}^{-1}_{2}\mathcal{A}$ still has the minimal polynomial $p_{2}(t)=(t\pm1)(t-1)$ with degree 2. Since the preconditioned matrix $\mathcal{P}^{-1}_{2}\mathcal{A}$ is unsymmetric, the generalized minimum residual (GMRES) method is commonly used. According to its convergence property, the GMRES method with the preconditioner $\mathcal{P}_{2}$ should converge in 2 iterations. However, it is often expensive to calculate $S^{-1}$ or to solve the linear system with $S$ as the coefficient matrix. Thus, the optimal preconditioners for $\mathcal{A}$ are normally replaced by their approximations
$$
\widehat{\mathcal{P}}_{1}=\left(
\begin{array}{cc}
\widehat{A} & \bold{0}\\
\bold{0}    & -\widehat{S}
\end{array}
\right),
\widehat{\mathcal{P}}_{2}=\left(
\begin{array}{cc}
\widehat{A} & B\\
\bold{0}    & \pm\widehat{S}
\end{array}
\right),
$$
where $\widehat{A}$ and $\widehat{S}$ are certain suitable approximations of $A$ and $S$. For purpose of reducing the computational cost related to the above preconditioners, $\widehat{S}$ is commonly supposed to have some sparse and structured factorization. These techniques have been applied to constructing efficient preconditioners for many problems. 

\subsection{Previous work}\label{sec2-3}
Recently, a number of preconditioners have been built for the saddle point problem (\ref{eq2-7}) arising from the pure Neumann boundary control problem, which has the block form in (\ref{eq2-8}) with
$$A=\left(
\begin{array}{cc}
M        & \bold{0}\\
\bold{0} & \beta M_{b}
\end{array}
\right),
B=\left(
\begin{array}{c}
K\\
-N^{T}_{b}
\end{array}
\right),
C=\left(
\begin{array}{cc}
K & -N_{b}
\end{array}
\right).
$$
The resulting Schur complement with respect to $A$ is 
$$S=KM^{-1}K+\frac{1}{\beta}N_{b}M^{-1}_{b}N^{T}_{b}.$$
Here we give a brief description of the results proposed by Rees et al. \cite{trh1} and Pearson et al. \cite{jpm3} for this problem.

In \cite{trh1}, Rees et al. presented a block diagonal preconditioner taking the form:
\begin{equation}
\label{eq2-10}
\mathcal{P}^{B}_{D}=\left(
\begin{array}{cc}
\widehat{A}^{B}_{D} & \bold{0}\\
\bold{0}             & \widehat{S}^{B}_{D}
\end{array}
\right),
\end{equation}
where
$$
\widehat{A}^{B}_{D}=\left(
\begin{array}{cc}
\widetilde{M}  & \bold{0} \\
\bold{0}       & \beta \widetilde{M_{b}} \\
\end{array}
\right),
\widehat{S}^{B}_{D}=\widetilde{K}M^{-1}\widetilde{K}^{T}.
$$
In this method, the mass matrices are approximated by a fixed number of Chebyshev iterations. The stiffness matrix is approximated by two algebric multigrid (AMG) V-cycles in $\mathcal{P}^{B}_{D}$. Moreover, the key point of constructing these preconditioners lies in approximating the Schur complement only by its first term, which has a sparse factorization naturally. When these preconditioners are used combined with the minimal residual (MINRES) method, they are found to be mesh size independent while the performance deteriorates as the regularization parameter $\beta$ tends to 0.

On basis of a matching strategy, Pearson et al. \cite{jpm3} proposed a block diagonal preconditioner, which is of the forms:
\begin{equation}
\label{eq2-11}
\widetilde{\mathcal{P}}=\left(
\begin{array}{cc}
\widetilde{A} & \bold{0}\\
\bold{0}          & \widetilde{S}
\end{array}
\right),
\end{equation}
where
$
\widetilde{A}=\left(
\begin{array}{cc}
\widetilde{M}  & \bold{0} \\
\bold{0}       & \beta \widetilde{M_{b}} \\
\end{array}
\right)
$
and the mass matrices are also approximated by some Chebyshev iterations. Besides, the Schur complement is approximated as
$$\widetilde{S}=\bigg(K+\sqrt{\frac{h}{\beta}}M_{\gamma}\bigg)(h\widehat{M_{\gamma}})^{-1}\bigg(K+\sqrt{\frac{h}{\beta}}M_{\gamma}\bigg),$$
where $M_{\gamma}=N_{b}M^{-1}_{b}N^{T}_{b}$,  $\widehat{M_{\gamma}}$ is a matrix given by the matrix $M_{b}$ in the boundary components and a small scalar of order $h$ for all nodes corresponding to the degrees of freedom on the interior, and $h$ denotes the mesh size.  Unlike the matching strategy for the distributed Poisson control problem \cite{jpa2},  $\widetilde{S}^{-1}S$ does not have precise spectrum bounds even though their eigenvalues were illustrated to be bounded heuristically by constants of $\mathcal{O}(1)$. Besides, the required iteration numbers of the MINRES method combined with these two preconditioners were shown to increase as the mesh size $h$ or the regulation parameter $\beta$ turns smaller.

At first, we attempted to precondition the saddle point problem (\ref{eq2-7}) directly by using matrix permutation and Schur complement approximation techniques. But we found the main problem during this procedure is that the stiffness matrix $K$ is singular while the nonsingularity is needed in the Schur complement of the transformed linear system and the corresponding spectral analysis. In order to achieve the above preconditioning techniques, we will extend the saddle point problem in the next section and build new efficient preconditioners on this basis.

\section{Extended system}\label{es}
\subsection{Regularization of the pure Neumann probem}
The Galerkin discretization of the pure Neumann problem:
\begin{equation}
\label{eq3-1}
-\nabla^{2}y=f\quad\text{in}\quad\Omega \quad \text{and}\quad \displaystyle\frac{\partial y}{\partial n}=u\quad\text{on}\quad\partial\Omega
\end{equation}
leads to a linear system as given in (\ref{eq2-6}). Since the stiffness matrix $K$ is singular, some additional constraints should be added on the candidate solution in order to solve the linear system by a direct or iterative \hl{solver. }

\hl{Recall} that under the compatibility condition (\ref{eq2-3}), this pure Neumann problem has solutions. Suppose $y_{0}$ is a solution. As done in \cite{pbr1}, we constrain the candidate solution $y_{0}$ globally by the condition:
\begin{equation}
\label{eq3-2}
\int_{\Omega}y_{0} d\Omega=0.
\end{equation}
Let $y_{0h}\in V^{h}$ be the discretized form of $y_{0}$, which takes the form 
$$y_{0h}=\sum^{n}_{j=1}Y_{0j}\phi_{j}$$
with the coefficient vector $\bold{y_{0}}=(Y_{01},\cdots,Y_{0n})^{T}$. Then the Galerkin discretization of condition (\ref{eq3-2}) yields
$$
\int_{\Omega}y_{0h}d\Omega=\sum^{n}_{j=1}Y_{0j}\int_{\Omega}\phi_{j}d\Omega=\sum^{n}_{j=1}Y_{0j}\omega_{j}=0,
$$
where $\omega_{j}=\int_{\Omega}\phi_{j}d\Omega$ for $j=1,\cdots,n$. Denote $\boldsymbol{\omega}=(\omega_{1},\cdots,\omega_{n})^{T}$, then we have
$$\boldsymbol{\omega}^{T}\bold{y_{0}}=0.$$
Moreover, \hl{for the nodal basis of an FE discretization, it holds} that $\sum^{n}_{j=1}\phi_{j}=1$, the elements of $\boldsymbol{\omega}$ can be computed as
$$\omega_{j}=\int_{\Omega}\phi_{j}\cdot 1d\Omega=\int_{\Omega}\phi_{j}\sum^{n}_{i=1}\phi_{i}d\Omega=\sum^{n}_{i=1}\int_{\Omega}\phi_{i}\phi_{j}d\Omega.$$
Then we have
$$\boldsymbol{\omega}=M\bold{1}.$$
Therefore, a direct Galerkin discretization of the regularized Neumann problem (\ref{eq3-1})-(\ref{eq3-2}) leads to 
\begin{equation}
\label{eq3-3}
K\bold{y_{0}}=\bold{f}+N_{b}\bold{u}\quad \text{and}\quad \boldsymbol{\omega}^{T}\bold{y_{0}}=0,
\end{equation}
which is equivalent to solving the following problem:
$$
\min_{\bold{y_{0}}\in R^{n}}\frac{1}{2}\bold{y^{T}_{0}}K\bold{y_{0}}-\bold{y^{T}_{0}}(\bold{f}+N_{b}\bold{u}) \quad\text{s.t.}\quad \boldsymbol{\omega}^{T}\bold{y_{0}}=0.
$$
Using the Lagrange multiplier method, we can obtain the solution of (\ref{eq3-1})-(\ref{eq3-2}) by solving the following linear system:
\begin{equation}
\label{eq3-4}
\left(
\begin{array}{cc}
K & \boldsymbol{\omega}\\
\boldsymbol{\omega}^{T} & 0
\end{array}
\right)
\left(
\begin{array}{c}
\bold{y_{0}}\\
\lambda
\end{array}
\right)
=
\left(
\begin{array}{c}
\bold{f}+N_{b}\bold{u}\\
\bold{0}
\end{array}
\right),
\end{equation}
where $\lambda$ denotes the Lagrange multiplier. It was shown in \cite{pbr1} that the solution of the regularized Neumann problem (\ref{eq3-1})-(\ref{eq3-2}) coincides with that of the original Neumann problem (\ref{eq3-1}). More importantly, the coefficient matrix in (\ref{eq3-4}) is nonsingular, which can be solved by direct or iterative methods.

\subsection{A new saddle point problem} 
Recall that if $y_{0}$ is a solution of the pure Neumann problem (\ref{eq3-1}), then any solution $y$ takes the form
\begin{equation}\label{eq3-5}y=y_{0}+c, c\in\mathbb{R}.\end{equation}
In order to overcome the singularity of the stiffness matrix $K$, now we consider the regularized optimal control problem
\begin{equation}
\label{eq3-6}
\min_{y_{0},u}\frac{1}{2}||y_{0}+c-y_{d}||^{2}_{L_{2}(\Omega)}+\frac{\beta}{2}||u||^{2}_{L_{2}(\partial\Omega)},
\end{equation}
subject to
\begin{equation}
\label{eq3-7}
\left\{
\begin{array}{rclcl}
-\nabla^{2}y_{0}&=&f                  &\text{in}&\Omega,\\[1mm]
\displaystyle\frac{\partial y_{0}}{\partial n}&=&u & \text{on}&\partial\Omega,\\[1mm]
\displaystyle\int_{\Omega}y_{0} d\Omega&=&0.&&
\end{array}
\right.
\end{equation}
As done in Section 1-1, the matrix form of the weak formulation of the above problem reads: find a solution $(\bold{y_{0}},\bold{u})\in\mathbb{R}^{n}\times\mathbb{R}^{m_{B}}$ to the problem
\begin{equation}
\label{eq3-8}
\left\{
\begin{array}{cl}
\min&\displaystyle\frac{1}{2}(\bold{y_{0}}+c\cdot\bold{1})^{T}M(\bold{y_{0}}+c\cdot\bold{1})-(\bold{y_{0}}+c\cdot\bold{1})^{T}\bold{b}+\displaystyle\frac{\beta}{2}\bold{u}^{T}M_{b}\bold{u},\\[3mm]
\displaystyle\text{s.t.}& \left(
\begin{array}{cc}
K & \boldsymbol{\omega}\\
\boldsymbol{\omega}^{T} & 0
\end{array}
\right)
\left(
\begin{array}{c}
\bold{y_{0}}\\
\lambda
\end{array}
\right)
=
\left(
\begin{array}{c}
\bold{f}+N_{b}\bold{u}\\
\bold{0}
\end{array}
\right).
\end{array}
\right.
\end{equation}
The first order conditions for optimality take the form
\begin{equation}
\label{eq3-9}
\left(
\begin{array}{cccccc}
M                       & \bold{0}            & \bold{0}    & \boldsymbol{\omega}                & K                       & \boldsymbol{\omega} \\
\bold{0}                & 0                   & \bold{0}    & 0                                  & \boldsymbol{\omega}^{T} & 0 \\
\bold{0}                & \bold{0}            & \beta M_{b} & \bold{0}                           & -N^{T}_{b}              & \bold{0} \\
\boldsymbol{\omega}^{T} & 0                   & \bold{0}    &\boldsymbol{\omega^{T}}\bold{1}& \bold{0}                & 0        \\
K                       & \boldsymbol{\omega} & -N_{b}      & \bold{0}                           & \bold{0}                & \bold{0} \\
\boldsymbol{\omega}^{T} & 0                   & \bold{0}    & 0                                  & \bold{0}                & 0 
\end{array}
\right)
\left(
\begin{array}{c}
\bold{y_{0}}\\
\lambda \\
\bold{u}\\
c\\
\bold{p}\\
\pi
\end{array}
\right)
=
\left(
\begin{array}{c}
\bold{b}\\
0       \\
\bold{0}\\
\bold{b^{T}}\bold{1}\\
\bold{f}\\
0
\end{array}
\right),
\end{equation}
where $\pi$ is a Lagrange multiplier. Denote
$$
M_{e}=
\left(
\begin{array}{cc}
M       &\bold{0}\\
\bold{0}&0
\end{array}
\right),
K_{e}=
\left(
\begin{array}{cc}
K                       &\boldsymbol{\omega}\\
\boldsymbol{\omega}^{T} &0
\end{array}
\right),
Z_{e}=
\left(
\begin{array}{cc}
\bold{0}&\boldsymbol{\omega}\\
\bold{0}&0
\end{array}
\right),
$$
$$
M_{be}=
\left(
\begin{array}{cc}
\beta M_{b}  &\bold{0}\\
\bold{0}     &\boldsymbol{\omega^{T}}\bold{1}
\end{array}
\right),
N_{be}=
\left(
\begin{array}{cc}
N_{b}   &\bold{0}\\
\bold{0}&0
\end{array}
\right),
$$
and
$$
\bold{y}_{e}=
\left(
\begin{array}{c}
\bold{y_{0}}\\
\lambda
\end{array}
\right),
\bold{u}_{e}=
\left(
\begin{array}{c}
\bold{u}\\
c
\end{array}
\right),
\bold{p}_{e}=
\left(
\begin{array}{c}
\bold{p}\\
\pi
\end{array}
\right),
$$
$$
\bold{b}_{e}=
\left(
\begin{array}{c}
\bold{b}\\
0
\end{array}
\right),
\bold{z_{e}}=
\left(
\begin{array}{c}
\bold{0}\\
\bold{b^{T}}\cdot\bold{1}
\end{array}
\right),
\bold{f}_{e}=
\left(
\begin{array}{c}
\bold{f}\\
0
\end{array}
\right),
$$
then we have
\begin{equation}
\label{eq3-10}
\renewcommand\arraystretch{1.2}\left(
\begin{array}{cc|c}
M_{e}        & Z_{e}       & K_{e}\\
Z^{T}_{e}    & M_{be} & -N^{T}_{be}\\
\hline
K_{e}        & -N_{be}     & \bold{0}
\end{array}
\right)
\left(
\begin{array}{c}
\bold{y}_{e}\\
\bold{u}_{e}\\
\bold{p}_{e}
\end{array}
\right)
=
\left(
\begin{array}{c}
\bold{b}_{e}\\
\bold{z_{e}}\\
\bold{f}_{e}
\end{array}
\right),
\end{equation}
where $K_{e}$ is referred to as the extended stiffness matrix in this paper. 
As partitioned in (\ref{eq3-10}), the extended coefficient matrix still has the saddle point structure. In this paper, we obtain the numerical solution of the pure Neumann boundary control problem by solving this saddle point problem. In the next section, we will focus on building an efficient preconditioner for this saddle point problem.

\section{Preconditioning}\label{np}
In this section, we propose a new preconditioning method for the saddle point problem (\ref{eq3-10}). We also analyze the eigenvalue properties of the preconditioned matrix.

\subsection{A new preconditioning method}
In a different way from the work introduced in Section 2-3 that is based on the preconditioning theory for the matrix in (\ref{eq2-8}), we propose a new preconditioning method for the saddle point problem (\ref{eq3-10}) on basis of row permutations and the preconditioning theory for the more general case (\ref{eq2-9}). 

At first, we transform row blocks of (\ref{eq3-10}) to obtain its equivalent form
\begin{equation}
\label{eq4-1}
\underbrace{
\renewcommand\arraystretch{1.2}
\left(
\begin{array}{c|cc}
K_{e}        & -N_{be}     & \bold{0}\\
\hline
\displaystyle Z^{T}_{e}    & M_{be}      & \displaystyle -N^{T}_{be}\\
M_{e}        & Z_{e}       & K_{e}
\end{array}
\right)
}_{\mathcal{A}}
\left(
\begin{array}{c}
\bold{y}_{e}\\
\bold{u}_{e}\\
\bold{p}_{e}
\end{array}
\right)
=
\left(
\begin{array}{c}
\bold{f}_{e}\\
\bold{z_{e}}\\
\bold{b}_{e}
\end{array}
\right).
\end{equation}
The coefficient matrix has the structure of $\mathcal{A}$ in (\ref{eq2-9}) with
$$
A=K_{e},
B=\left(
\begin{array}{cc}
-N_{be} & \bold{0}
\end{array}
\right),
C=\left(
\begin{array}{c}
Z^{T}_{e}\\
M_{e}
\end{array}
\right),
D=\left(
\begin{array}{cc}
M_{be}      & -N^{T}_{be}\\
Z_{e}       & K_{e}
\end{array}
\right).
$$
The resulting Schur complement of the linear system takes the form
\begin{equation}
\label{eq4-2}
S=\left(
\begin{array}{cc}
M_{be}+Z^{T}_{e}K^{-1}_{e}N_{be} & -N^{T}_{be}\\
Z_{e}+M_{e}K^{-1}_{e}N_{be}      & K_{e}
\end{array}
\right).
\end{equation}
Denote
$$
K^{-1}_{e}=
\left(
\begin{array}{cc}
J            & \bold{v}\\
\bold{v}^{T} & a
\end{array}
\right),
$$
where $J\in\mathbb{R}^{n\times n}, \bold{v}\in\mathbb{R}^{n}$ and $a\in\mathbb{R}$. Based on the fact $K_{e}K^{-1}_{e}=I$, it is easy to derive that
$$\boldsymbol{\omega^{T}}J=\bold{0}, \quad \boldsymbol{\omega^{T}}\bold{v}=1.$$
Immediately, we have
\begin{equation}
\label{eq4-3}
Z^{T}_{e}K^{-1}_{e}N_{be}=\bold{0}.
\end{equation}
We approximate the Schur complement as
\begin{equation}
\label{eq4-4}
\widehat{S}=\left(
\begin{array}{cc}
M_{be}       & -N^{T}_{be}\\
\bold{0}     & K_{e}
\end{array}
\right).
\end{equation}
Then based on the preconditioning theory for the more general case (\ref{eq2-9}), we can take the following block triangular matrix
\begin{equation}
\label{eq4-5}
\widehat{\mathcal{P}}_{2}=\left(
\begin{array}{cc}
A & B\\
0 & \widehat{S}
\end{array}
\right)
=
\renewcommand\arraystretch{1.2}\left(
\begin{array}{c|cc}
K_{e}        & -N_{be}     & \bold{0}\\
\hline
\bold{0}     & M_{be}      & -N^{T}_{be}\\
\bold{0}     & \bold{0}    & K_{e}
\end{array}
\right)
\end{equation}
as \hl{a preconditioner} for $\mathcal{A}$. The linear system $\widehat{\mathcal{P}}_{2}\bold{g}=\bold{d}$ with vectors $\bold{g}=(\bold{g}^{T}_{1}, \bold{g}^{T}_{2}, \bold{g}^{T}_{3})^{T}$ and $\bold{d}=(\bold{d}^{T}_{1}, \bold{d}^{T}_{2}, \bold{d}^{T}_{3})^{T}$ can be solved step by step as follows:
\begin{equation}
\label{eq4-6}
\left\{
\begin{array}{rll}
K_{e}\bold{g}_{3}&=&\bold{d}_{3},\\
M_{be}\bold{g}_{2}&=&\bold{d}_{2}+N^{T}_{be}\bold{g}_{3},\\
K_{e}\bold{g}_{1}&=&\bold{d}_{1}+N_{be}\bold{g}_{2}.\\
\end{array}
\right.
\end{equation}
Since the extended stiffness matrix $K_{e}$ and boundary mass matrix $M_{be}$ are nonsingular and sparse, the solutions of the linear systems in (\ref{eq4-6}) can be computed at low cost using direct methods or iterative methods. In this paper, the GMRES method will be used combined with the preconditioner $\widehat{\mathcal{P}}_{2}$ for solving (\ref{eq4-1}) as the preconditioned matrix $\widehat{\mathcal{P}}^{-1}_{2}\mathcal{A}$ is unsymmetric. Besides, we remark that the above preconditioning method can also be applied to the mixed boundary control problem. In that case, it is no need to do the extension as in Section 3 since the stiffness matrix is already nonsingular originally.

\subsection{Spectral analysis of the preconditioner}\label{sec4-2}
In this subsection, we analyze the spectrum of the preconditioned matrix $\widehat{\mathcal{P}}^{-1}_{2}\mathcal{A}$, which is the same as that of $\mathcal{A}\widehat{\mathcal{P}}^{-1}_{2}$. Based on (\ref{eq4-3}), it is noted that
$$
\mathcal{A}\widehat{\mathcal{P}}^{-1}_{2}=\left(
\begin{array}{cc}
I       & \bold{0}\\
CA^{-1} &S\widehat{S}^{-1}
\end{array}
\right),
$$
thus 1 is an eigenvalue of $\mathcal{A}\widehat{\mathcal{P}}^{-1}_{2}$ with multiplicity $n+1$ and with eigenvectors $e_{j}\in \mathbb{R}^{2n+m_{B}+2}, j=1,\cdots, n+1$. Besides, consider the eigenvalue problem
\begin{equation}
\label{eq4-7}
S\bold{x}=\mu \widehat{S}\bold{x}, 
\end{equation}
where $\bold{x}\neq 0\in \mathbb{R}^{n+m_{B}+2}$ is the eigenvector with \hl{$\bold{x}^{T}=(\bold{x}^{T}_{1},\bold{x}^{T}_{2})$.} Substituting (\ref{eq4-2})-(\ref{eq4-4}) into (\ref{eq4-7}), we have
$$
\renewcommand\arraystretch{1.2}
\left\{
\begin{array}{lll}
(1-\mu)(M_{be}\bold{x}_{1}-N^{T}_{be}\bold{x}_{2})   & = &0,\\
(Z_{e}+M_{e}K^{-1}_{e}N_{be})\bold{x}_{1}+(1-\mu)K_{e}\bold{x}_{2}& = &0.
\end{array}
\right.
$$

\hl{It is noted that $\mu=1$, $\bold{x}_{1}=\bold{0}, \bold{x}_{2}\in \mathbb{R}^{n+1}$ is a solution.} Therefore, 1 is an eigenvalue of $S\widehat{S}^{-1}$ with multiplicity $n+1$ and with eigenvectors $e_{j}\in \mathbb{R}^{n+m_{B}+2}, j=m_{B}+2,\cdots, n+m_{B}+2$. As a result of the above analysis, we obtain that 1 is an eigenvalue of the preconditioned matrix $\widehat{\mathcal{P}}^{-1}_{2}\mathcal{A}$ with multiplicity $2n+2$.

If $\mu\neq1$, then
$$
(1-\mu)M_{be}\bold{x}_{1}+N^{T}_{be}K^{-1}_{e}(Z_{e}+M_{e}K^{-1}_{e}N_{be})\bold{x}_{1}=0.
$$
According to (\ref{eq4-3}), $N^{T}_{be}K^{-1}_{e}Z_{e}=\bold{0}$. Since the extended boundary mass matrix $M_{be}$ is symmetric positive definite and the matrix $N^{T}_{be}K^{-1}_{e}M_{e}K^{-1}_{e}N_{be}$ is symmetric, the eigenvalue $\mu$ is real. On the other hand, we have
$$
(Z_{e}+M_{e}K^{-1}_{e}N_{be})M^{-1}_{be}N^{T}_{be}\bold{x}_{2}+(1-\mu)K_{e}\bold{x}_{2}=0.
$$
Note that $Z_{e}M^{-1}_{be}N^{T}_{be}=\bold{0}$. Then
\begin{equation}
\label{eq4-8} 
\mu=1+\frac{\bold{x}^{T}_{2}K^{-1}_{e}M_{e}K^{-1}_{e}N_{be}M^{-1}_{be}N^{T}_{be}\bold{x}_{2}}{\bold{x}^{T}_{2}\bold{x}_{2}}.
\end{equation}

Denote $T=K^{-1}_{e}M_{e}K^{-1}_{e}N_{be}M^{-1}_{be}N^{T}_{be}$, next let us analyze its eigenvalues. Notice that if the nodes are ordered such that all the boundary nodes are listed first followed by the interior nodes, then $M_{r}=N_{be}M^{-1}_{be}N^{T}_{be}$ has the block structure:
\begin{equation}
\label{eq4-9}
M_{r}=
\left(
\begin{array}{cc}
M_{b}   &\bold{0}\\
\bold{0}&\bold{0}
\end{array}
\right).
\end{equation}
Moreover, the mass matrix $M$ and the boundary mass matrix $M_{b}$ are normally considered in lumped forms, which are diagonal matrices that are spectrally equivalent to scalar matrices including the mesh size $h$ \cite{hed1}. If ignoring all the multiplicative constants,  \hl{$M_{e}$ and $M_{r}$ are spectrally equivalent to matrices}
\begin{equation}
\label{eq4-10}
\left(
\begin{array}{cc}
h^{2}I_{n} &\bold{0}\\
\bold{0}   &\bold{0}
\end{array}
\right),
\left(
\begin{array}{cc}
hI_{m_{B}}   &\bold{0}\\
\bold{0}         &\bold{0}
\end{array}
\right),
\end{equation}
\hl{respectively. Then $K^{-1}_{e}M_{e}K^{-1}_{e}$ is spectrally equivalent to the following matrix}
$$
\left(
\begin{array}{cc}
J            & \bold{v}\\
\bold{v}^{T} & a
\end{array}
\right)
\left(
\begin{array}{cc}
h^{2}I_{n} &\bold{0}\\
\bold{0}   &\bold{0}
\end{array}
\right)
\left(
\begin{array}{cc}
J            & \bold{v}\\
\bold{v}^{T} & a
\end{array}
\right)
=
\left(
\begin{array}{cc}
h^{2}J^{2}         & h^{2}J\bold{v}\\
h^{2}\bold{v}^{T}J & h^{2}\bold{v}^{T}\bold{v}
\end{array}
\right).
$$
\hl{Thus $T=K^{-1}_{e}M_{e}K^{-1}_{e}M_{r}$ is spectrally equivalent to}
\begin{equation}
\label{eq4-11}
\left(
\begin{array}{cc}
h^{3}J^{2}_{m_{B}} & \bold{0}\\
\ast               & \bold{0}
\end{array}
\right),
\end{equation}
where $J_{m_{B}}\in\mathbb{R}^{m_{B}\times m_{B}}$ is an leading principal minor of $J$. It follows that the matrix $T$ is spectrally equivalent to the matrix $h^{3}J^{2}_{m_{B}}$. Moreover, note that the matrix $J$ is symmetric and $J_{m_{B}}$ is a principal \hl{submatrix.} According to the Cauchy's interlace theorem (see \cite{xj1}, Theorem 1.2), immediately we have the eigenvalue relations between $J_{m_{B}}$ and $J$ as the following
\begin{equation}
\label{eq4-12}
\lambda_{\min}(J)\leq\lambda_{\min}(J_{m_{B}})\leq\lambda_{\max}(J_{m_{B}})\leq\lambda_{\max}(J).
\end{equation}
Now we consider the relation between the eigenvalues of $J$ and those of $K$, which is presented in Lemma 4-1.
\begin{lemma}\label{lemma4-1}
The nonzero eigenvalues of $K$ and $J$ are mutually reciprocal with the same eigenvectors.
\end{lemma}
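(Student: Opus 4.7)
The plan is to exploit $K_{e}K_{e}^{-1}=I$ together with the bordered structure of $K_{e}$ to lift each nonzero eigenpair of $K$ to an eigenpair of $K_{e}$ carrying the same eigenvalue, whence the reciprocal eigenpair of $J$ drops out upon inverting $K_{e}$.

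First I would expand $K_{e}K_{e}^{-1}=I_{n+1}$ and $K_{e}^{-1}K_{e}=I_{n+1}$ block-wise to collect the identities $KJ+\boldsymbol{\omega}\mathbf{v}^{T}=I_{n}$, $\boldsymbol{\omega}^{T}J=\mathbf{0}$, $\boldsymbol{\omega}^{T}\mathbf{v}=1$, and $J\boldsymbol{\omega}=\mathbf{0}$; the middle two were already recorded just before (\ref{eq4-3}). Together these say that $J$ is singular with $\boldsymbol{\omega}$ spanning its kernel, mirroring $K\mathbf{1}=\mathbf{0}$, and that $\mathbf{v}$ plays the role of a ``right inverse'' of $\boldsymbol{\omega}^{T}$. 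Combined with symmetry of $J$ this already pins down the correspondence on the kernels.

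Next, take $Kx=\lambda x$ with $\lambda\neq 0$. Symmetry of $K$ immediately gives $\mathbf{1}^{T}x=0$, and under the lumped-mass identification standing in this section (where $M$ is spectrally equivalent to $h^{2}I_{n}$, so $\boldsymbol{\omega}=M\mathbf{1}$ is parallel to $\mathbf{1}$) this upgrades to $\boldsymbol{\omega}^{T}x=0$. Setting $\tilde{x}=(x^{T},0)^{T}\in\mathbb{R}^{n+1}$, a direct block multiplication yields
$$
K_{e}\tilde{x}=\begin{pmatrix} Kx+0\cdot\boldsymbol{\omega} \\ \boldsymbol{\omega}^{T}x\end{pmatrix}=\begin{pmatrix}\lambda x\\ 0\end{pmatrix}=\lambda\tilde{x},
$$
so $\tilde{x}$ is an eigenvector of $K_{e}$ with eigenvalue $\lambda$, hence of $K_{e}^{-1}$ with eigenvalue $1/\lambda$. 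Reading off the top block of $K_{e}^{-1}\tilde{x}=\tilde{x}/\lambda$ produces exactly $Jx=x/\lambda$, while the bottom block delivers the bonus identity $\mathbf{v}^{T}x=0$. The reverse implication is the mirror image: given $Jy=\mu y$ with $\mu\neq 0$, symmetry of $J$ together with $J\boldsymbol{\omega}=\mathbf{0}$ yields $\boldsymbol{\omega}^{T}y=0$, and the same lifting applied to $K_{e}^{-1}$ and then inverted by $K_{e}$ recovers $Ky=y/\mu$ on the top block.

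The main obstacle is the auxiliary step $\boldsymbol{\omega}^{T}x=0$. The symmetry of $K$ alone only secures $\mathbf{1}^{T}x=0$, and passing from that to orthogonality against $\boldsymbol{\omega}$ genuinely requires $\boldsymbol{\omega}\parallel\mathbf{1}$; this is precisely what the lumped-mass identification used throughout Section~4 already supplies, so the argument is consistent with the paper's running simplifications. It is worth flagging the dependence explicitly, because without $\boldsymbol{\omega}\parallel\mathbf{1}$ the vector $(x^{T},0)^{T}$ ceases to be an eigenvector of $K_{e}$ and the eigenvector correspondence between $K$ and $J$ only survives modulo the one-dimensional kernel of $K$, whereas the reciprocal eigenvalue statement itself would need to be re-derived through a less direct argument.
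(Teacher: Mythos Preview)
Your proof is correct and mirrors the paper's approach: both exploit that eigenvectors of $K_{e}$ with vanishing last coordinate are simultaneously eigenvectors of $K$ and (via $K_{e}^{-1}$) of $J$ with reciprocal eigenvalues; the paper simply runs the argument in the opposite direction, starting from an eigenpair of $K_{e}$, specializing to $x_{2}=0$, and then invoking the dimension count $\dim\ker(\boldsymbol{\omega}^{T})=n-1$ to conclude that all nonzero eigenvalues of $K$ are captured. Your flag that the step $\boldsymbol{\omega}^{T}x=0$ needs $\boldsymbol{\omega}\parallel\mathbf{1}$ is well placed---the paper's dimension count silently relies on the very same condition (otherwise eigenvectors of $K$ need not lie in $\ker\boldsymbol{\omega}^{T}$ and the count fails)---so you have surfaced a hidden assumption rather than introduced a new one.
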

\begin{proof}
Consider the eigenvalue problem
$$K_{e}\bold{x}=\lambda\bold{x},$$
where $\lambda\neq0$ is the eigenvalue and $\bold{x}$ is the corresponding eigenvector with $\bold{x}=(\bold{x}^{T}_{1},x_{2})^{T}, \bold{x}_{1}\in\mathbb{R}^{n}, x_{2}\in\mathbb{R}$. It derives
$$K^{-1}_{e}\bold{x}=\frac{1}{\lambda}\bold{x}.$$
Then 
$$
\left\{
\begin{array}{ccc}
K\bold{x}_{1}+x_{2}\boldsymbol{\omega}& = & \lambda\bold{x}_{1},\\
\boldsymbol{\omega}^{T}\bold{x}_{1}   & = & \lambda x_{2},\\
J\bold{x}_{1}+x_{2}\bold{v}           & = & \frac{1}{\lambda}\bold{x}_{1},\\
\bold{v}^{T}\bold{x}_{1}+ax_{2}       & = & \frac{1}{\lambda} x_{2}.
\end{array}
\right.
$$
If $x_{2}=0$, we have
$$
\left\{
\begin{array}{ccc}
K\bold{x}_{1}                       & = & \lambda\bold{x}_{1},\\
\boldsymbol{\omega}^{T}\bold{x}_{1} & = & 0,\\
J\bold{x}_{1}                       & = & \frac{1}{\lambda}\bold{x}_{1},\\
\bold{v}^{T}\bold{x}_{1}            & = & 0.
\end{array}
\right.
$$
Since $\text{dim}(\text{ker}(\boldsymbol{\omega}^{T}))=n-1$ and $K$ has an eigenvalue 0, the nonzero eigenvalues of $K$ and its inverse are all included in this case. Moreover, it can be easily seen from the above equations that the nonzero eigenvalues of $K$ and $J$ are mutually reciprocal and the corresponding eigenvectors are the same.
\end{proof}

Recall that the stiffness matrix $K$ approximates the scaled identity matrix in the sense that \cite{hed1}
$$d_{1}h^{2}\leq\frac{\bold{v}^{T}K\bold{v}}{\bold{v}^{T}\bold{v}}\leq d_{2}, \forall \bold{v}\neq 0\in \mathbb{R}^{n},$$
where $d_{1}$ and $d_{2}$ are constants independent of the mesh size $h$. Based on Lemma 4-1, we have
$$
\frac{1}{d_{2}} \leq \frac{\bold{v}^{T}J\bold{v}}{\bold{v}^{T}\bold{v}}\leq \frac{h^{-2}}{d_{1}}, \forall \bold{v}\neq 0\in \mathbb{R}^{n}.
$$
Then, according to (\ref{eq4-12}) ,
\begin{equation}
\label{eq4-13}
\frac{h^{3}}{d^{2}_{2}}\leq \frac{\bold{z}^{T}(h^{3}J^{2}_{m_{B}})\bold{z}}{\bold{z}^{T}\bold{z}}\leq \frac{h^{-1}}{d^{2}_{1}}, \forall \bold{z}\neq 0\in \mathbb{R}^{m_{B}}.
\end{equation}
Combining (\ref{eq4-8}), (\ref{eq4-11}) and (\ref{eq4-13}), we have the eigenvalue bound of the preconditioned matrix $\widehat{\mathcal{P}}^{-1}_{2}\mathcal{A}$ as
\begin{equation}
\label{eq4-14}
1+\frac{1}{\beta}ch^{3}\leq\mu_{0}\leq 1+ \frac{1}{\beta}dh^{-1},
\end{equation}
where $c$ and $d$ are constants independent of the regularization parameter $\beta$ and mesh size $h$. 

The above results are summarized in Proposition 4-2.
\begin{proposition}\label{prop4-2}
If the block triangular matrix $\widehat{\mathcal{P}}_{2}$ in (\ref{eq4-5}) is taken as a preconditioner for the matrix $\mathcal{A}$ in (\ref{eq4-1}), then 1 is an eigenvalue of the preconditioned matrix $\widehat{\mathcal{P}}^{-1}_{2}\mathcal{A}$ with multiplicity $2n+2$ and the other $m_{B}$ eigenvalues are bounded by $[1+\frac{1}{\beta}ch^{3}, 1+\frac{1}{\beta}dh^{-1}]$, where $c$ and $d$ are constants independent of the regularization parameter $\beta$ and mesh size $h$.
\end{proposition}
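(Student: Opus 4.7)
The plan is to exploit the block factorization of $\mathcal{A}\widehat{\mathcal{P}}_{2}^{-1}$ (which is similar to the preconditioned matrix $\widehat{\mathcal{P}}_{2}^{-1}\mathcal{A}$) to split the spectrum into a cluster at $1$ and the $m_{B}$ non-unit eigenvalues that determine the quantitative bound. Since $\widehat{\mathcal{P}}_{2}$ shares its $(1,1)$ and $(1,2)$ blocks with $\mathcal{A}$ in the $2\times 2$ partition of (\ref{eq4-1}), the product $\mathcal{A}\widehat{\mathcal{P}}_{2}^{-1}$ will be block lower triangular with identity $(1,1)$ block of size $n+1$ and $(2,2)$ block equal to $S\widehat{S}^{-1}$. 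This immediately contributes $n+1$ eigenvalues equal to $1$ and reduces the remaining task to analyzing the generalized eigenvalue problem $S\mathbf{x}=\mu\widehat{S}\mathbf{x}$.

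Writing $\mathbf{x}=(\mathbf{x}_{1}^{T},\mathbf{x}_{2}^{T})^{T}$ and using the identity $Z_{e}^{T}K_{e}^{-1}N_{be}=\mathbf{0}$ from (\ref{eq4-3}), I would reduce the first block equation to $(1-\mu)(M_{be}\mathbf{x}_{1}-N_{be}^{T}\mathbf{x}_{2})=0$. The choice $\mathbf{x}_{1}=\mathbf{0}$ with arbitrary $\mathbf{x}_{2}\in\mathbb{R}^{n+1}$ supplies another $n+1$ eigenvalues equal to $1$, giving the claimed total multiplicity $2n+2$. For $\mu\neq 1$, I would solve for $\mathbf{x}_{1}$ from the first equation and substitute into the second; combined with $Z_{e}M_{be}^{-1}N_{be}^{T}=\mathbf{0}$, this produces the Rayleigh-quotient formula (\ref{eq4-8}) for $\mu$ in terms of $T=K_{e}^{-1}M_{e}K_{e}^{-1}N_{be}M_{be}^{-1}N_{be}^{T}$; the eigenvalue is real because the quadratic form can be rewritten using the symmetric matrix $N_{be}^{T}K_{e}^{-1}M_{e}K_{e}^{-1}N_{be}$.

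The quantitative bound then hinges on bounding the spectrum of $T$ through a chain of spectral equivalences. Lumped approximations of $M_{e}$ and of $N_{be}M_{be}^{-1}N_{be}^{T}$ are spectrally equivalent, up to the $\beta^{-1}$ contributed by $M_{be}^{-1}$, to $h^{2}\,\mathrm{diag}(I_{n},0)$ and $h\,\mathrm{diag}(I_{m_{B}},0)$ respectively; combining these with the $2\times 2$ block form of $K_{e}^{-1}$ introduced before Lemma~\ref{lemma4-1} shows that $T$ is spectrally equivalent to $\beta^{-1}h^{3}J_{m_{B}}^{2}$, where $J_{m_{B}}$ is the leading $m_{B}\times m_{B}$ principal submatrix of $J$. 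Cauchy's interlace theorem sandwiches the eigenvalues of $J_{m_{B}}$ between $\lambda_{\min}(J)$ and $\lambda_{\max}(J)$, Lemma~\ref{lemma4-1} converts those into reciprocals of the nonzero eigenvalues of $K$, and the standard FE estimate $d_{1}h^{2}\leq \mathbf{v}^{T}K\mathbf{v}/\mathbf{v}^{T}\mathbf{v}\leq d_{2}$ yields the two-sided bound (\ref{eq4-13}). Substituting into the Rayleigh quotient for $\mu$ produces (\ref{eq4-14}).

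The hard part will be the bookkeeping through the singular blocks: both $M_{e}$ and $N_{be}M_{be}^{-1}N_{be}^{T}$ contain zero diagonal blocks, so the spectral-equivalence step must be justified carefully with respect to their common null space, and one must verify that only the boundary-indexed part of $J$ survives the product so that the reduction to $J_{m_{B}}$ is legitimate. A related subtlety is to cleanly isolate the factor $\beta^{-1}$, since it enters through $M_{be}^{-1}$ and must be separated from the finite-element constants before being absorbed into the $\beta$-independent constants $c$ and $d$ appearing in the stated bound.
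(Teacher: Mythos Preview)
Your proposal is correct and follows essentially the same route as the paper's argument in Section~\ref{sec4-2}: the block lower-triangular structure of $\mathcal{A}\widehat{\mathcal{P}}_{2}^{-1}$, the generalized eigenvalue problem $S\mathbf{x}=\mu\widehat{S}\mathbf{x}$ with the same two-step count of the unit eigenvalues, the Rayleigh quotient (\ref{eq4-8}) obtained via $Z_{e}M_{be}^{-1}N_{be}^{T}=\mathbf{0}$, the spectral-equivalence reduction of $T$ to $h^{3}J_{m_{B}}^{2}$, and finally Cauchy interlacing together with Lemma~\ref{lemma4-1} and the FE bounds on $K$. Your remarks about carefully tracking the $\beta^{-1}$ through $M_{be}^{-1}$ and about the zero blocks in $M_{e}$ and $N_{be}M_{be}^{-1}N_{be}^{T}$ are well placed; the paper handles these informally, so spelling them out would only strengthen the write-up.
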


\hl{It is noticed that similar results are presented in} \cite{abat}-\cite{gbog}. \hl{Battermann has designed block preconditioners directly by taking fully advantage of matrix structure \cite{abat}. In this paper, while, the block preconditioner is built based on Ipen's preconditioning framework \cite{ii1}.  The spectral analysis here involves approximation of Schur complement and properties of $M$, $K$ and $M_{b}$. The result here is consistent with that in \cite{abat}. }

\hl{According to Proposition 4-2, the eigenvalue interval grows as $\beta\rightarrow 0$ and $h\rightarrow 0$.} Notice that the preconditioned system is non-symmetric and the GMRES method is used in this paper. Unlike the MINRES method or the Bramble-Pasciak-CG method, the spectral analysis is not sufficient to determine the convergence of the GMRES method (see \cite{mvz} for more details). We remark that the eigenvalue bound provided here, while, can help us to gain a better insight into the property of the proposed preconditioner. Actually, the numerical results in the next section show the good performance of the proposed preconditioner.

\section{Numerical results}\label{nm}
In this section, we illustrated the efficiency of the proposed preconditioning method by comparing its performance with other existing methods on solving the saddle point problem arising from the pure Neumann boundary control problem. All the experiments were conducted using MATLAB R2018a on a Linux computer with Intel Core i5-7300HQ at 2.50 GHz CPUs and 8 GB of RAM.

In every example, the domain was $\Omega=(0,1)^{2}$ and the source term was $f=0$. The regularization parameter took the values $\beta=10^{-2}, 10^{-4}, 10^{-6}, 10^{-8}$. For purpose of discretizing these problems, the $\bold{P1}$ basis functions on a quasi-uniform triangulation of $\Omega$ were used for the state, control and adjoint variables. 

In order to obtain numerical solutions of the pure Neumann boundary control problem, we solved the extended linear system (\ref{eq4-1}) by the GMRES method with the proposed preconditioner $\widehat{\mathcal{P}}_{2}$ in (\ref{eq4-5}), which was denoted as GMRES($\widehat{\mathcal{P}}_{2}$). Besides, we took
\begin{equation}
\label{eq5-1}
\widehat{\mathcal{P}}_{I}
=\left(
\begin{array}{c|cc}
K_{e}        & -N_{be}     & \bold{0}\\
\hline
\bold{0}     & M_{be}     & -N^{T}_{be}\\
\bold{0}     & \bold{0}    & I
\end{array}
\right)
\end{equation}
as a reference and denote the corresponding method as GMRES($\widehat{\mathcal{P}}_{I}$). This is done in a sense that the extened stiffness matrix $K_{e}$ in the Schur complement approximation (\ref{eq4-4}) was merely approximated by the identity matrix. As comparisons, we also used the MINRES method combined with the preconditioner in (\ref{eq2-10}) and (\ref{eq2-11}) to solve the saddle point problem (\ref{eq2-7}) as other ways to obtain numerical solutions of the pure Neumann boundary control problem \cite{trh1}, \cite{jpm3}. Accordingly, these two methods were denoted as MINRES($\mathcal{P}^{B}_{D}$) and MINRES($\widetilde{\mathcal{P}}$).

The linear systems with these preconditioners as coefficient matrices were solved step by step in a similar way as in (\ref{eq4-6}).  In all preconditioners, a solve with the mass matrix $M$ was approximated by 20 steps of relaxed Jacobi accelerated by the Chebyshev semi-iteration. And a solve with the stiffness matrix $K$ was approximated by three AMG V-cycles, which is implemented by the HSL package HSL$\_$MI20 via a MATLAB interface) \cite{jmjj}. \hl{In actual AMG procedure, the stiffness matrix $K$ was replaced with $\widehat{K}$ for better performance. $\widehat{K}$ is empirically the same as $K$ except for its last row and column with elements $(0,\cdots,0,1)$ and the corresponding transpose.  For preconditioners $\widehat{\mathcal{P}}_{2}$ and $\widehat{\mathcal{P}}_{I}$, a solve with the extended stiffness matrix $K_{e}$ is needed. But usually it is not easy for the AMG method to be applied directly. We address this issue by the application of the AMG method on $\widehat{K}$ instead.} In detail, for  $K_{e}\bold{x}=\bold{v}$  with $\bold{x}=(\bold{x}^{T}_{1},x_{2})^{T}$ and $\bold{v}=(\bold{v}^{T}_{1},v_{2})^{T}$, we approximated the solve as follows
\begin{equation}
\label{eq5-2}
\left\{
\begin{array}{rll}
x_{2}            &  =  & (\bold{v}^{T}_{1}\widehat{K}^{-1}\boldsymbol{\omega}-v_{2})/\boldsymbol{\omega}^{T}\widehat{K}^{-1}\boldsymbol{\omega},\\
\bold{x}_{1}&  =  & \widehat{K}^{-1}\bold{v}_{1}-x_{2}\widehat{K}^{-1}\boldsymbol{\omega}.\\
\end{array}
\right.
\end{equation}
Unless otherwise indicated, both the GMRES method and MINRES method were terminated when the relative residual in the 2-norm reached a desired tolerance $10^{-6}$. In the following result tables, the iteration numbers were exhibited followed by the CPU time (in seconds) in the brackets. A dash line implies that the corresponding method needs more than 500 iteration steps.

\subsection{Example 1}\label{sec5-1}
Consider the pure Neumann boundary control problem (\ref{eq2-1})-(\ref{eq2-2}) with the desired state
$$
y_{d}=\left\{
\begin{array}{c}
\begin{array}{ccl}
1,&x\leq\frac{1}{2}, y\leq\frac{1}{2},\\
0,&\text{elsewhere},
\end{array}
\end{array}
\right.
$$
which is illustrated in Fig. 1.

\begin{figure}[!b]
  \centering
  \includegraphics[height=4.5cm]{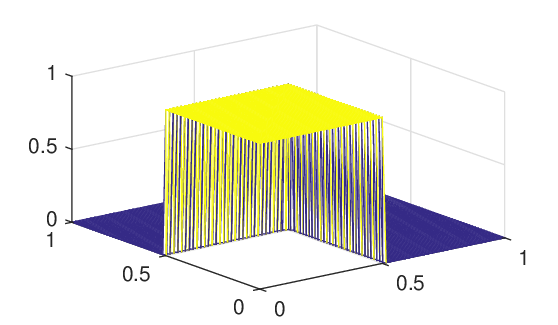}
  \caption{Desired state of Example 1.}
  \label{fig:state1}
\end{figure}

\begin{figure}[!b]
\centering
{\includegraphics[width=\textwidth]{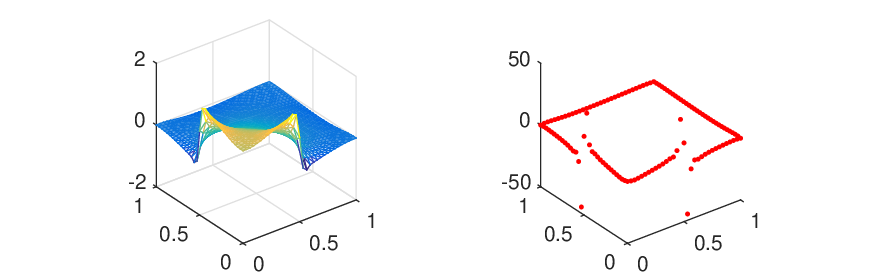}\label{fig:2}}
\caption{Computed state (left) and control (right) of the GMRES($\widehat{\mathcal{P}}_{2}$) method for Example 1 with DoF=2306 and $\beta=10^{-6}$.}
\label{fig:solution1}
\end{figure}

\begin{table}[!b]
\renewcommand\arraystretch{1.5}
\scriptsize
\centering
\caption{\protect\\Comparison of iteration numbers and CPU time for solving Example 1 with $\beta=10^{-2}$}
\begin{tabular}{c|c|c|c|c}
\hline
DoF\verb|\| Method & MINRES($\mathcal{P}^{B}_{D}$) \cite{trh1} & MINRES($\widetilde{\mathcal{P}}$) \cite{jpm3} & GMRES($\widehat{\mathcal{P}}_{I}$) &  $\bold{GMRES(\widehat{\mathcal{P}}_{2})}$\\
\hline
2306     & 41(0.34) & 77(0.79) & 69(0.34)     & $\bold{12(0.09)}$\\
8706     & 49(1.06) & 81(3.72) & 99(2.07)     & $\bold{12(0.32)}$\\
33794   & 49(4.17) & 87(19.3) & 160(9.82)   & $\bold{11(0.73)}$\\
133122 & 61(18.4) & 99(135)  & 242(66.1)   & $\bold{11(2.94)}$\\
\hline
\end{tabular}
\label{table1}
\end{table}

\begin{table}[!b]
\renewcommand\arraystretch{1.5}
\scriptsize
\centering
\caption{\protect\\Comparison of iteration numbers and CPU time for solving Example 1 with $\beta=10^{-4}$}
\begin{tabular}{c|c|c|c|c}
\hline
DoF\verb|\| Method & MINRES($\mathcal{P}^{B}_{D}$) \cite{trh1} & MINRES($\widetilde{\mathcal{P}}$) \cite{jpm3} & GMRES($\widehat{\mathcal{P}}_{I}$) &  $\bold{GMRES(\widehat{\mathcal{P}}_{2})}$\\
\hline
2306     & 117(0.59) & 75(0.75)   & 62(0.35)     & $\bold{22(0.10)}$\\
8706     & 143(3.03) & 90(4.13)   & 85(1.69)     & $\bold{23(0.54)}$\\
33794   & 149(12.5) & 105(23.4) & 127(7.21)   & $\bold{21(1.39)}$\\
133122 & 190(57.1) & 127(171)  & 208(50.0)   & $\bold{21(5.03)}$\\
\hline
\end{tabular}
\label{table2}
\end{table}

\begin{table}[!t]
\renewcommand\arraystretch{1.5}
\scriptsize
\centering
\caption{\protect\\Comparison of iteration numbers and CPU time for solving Example 1 with $\beta=10^{-6}$}
\begin{tabular}{c|c|c|c|c}
\hline
DoF\verb|\| Method & MINRES($\mathcal{P}^{B}_{D}$) \cite{trh1} & MINRES($\widetilde{\mathcal{P}}$) \cite{jpm3} & GMRES($\widehat{\mathcal{P}}_{I}$) &  $\bold{GMRES(\widehat{\mathcal{P}}_{2})}$\\
\hline
2306     & -{}-          & 75(0.72)   & 87(0.29)     & $\bold{50(0.21)}$\\
8706     & 499(10.1) & 97(4.22)   & 97(1.95)     & $\bold{53(1.46)}$\\
33794   & -{}-          & 133(29.7) & 142(8.34)   & $\bold{50(3.34)}$\\
133122 & -{}-          & 174(233)  & 220(52.9)   & $\bold{52(12.7)}$\\
\hline
\end{tabular}
\label{table3}
\end{table}

\begin{table}[!t]
\renewcommand\arraystretch{1.5}
\scriptsize
\centering
\caption{\protect\\Comparison of iteration numbers and CPU time for solving Example 1 with $\beta=10^{-8}$}
\begin{tabular}{c|c|c|c|c}
\hline
DoF\verb|\| Method & MINRES($\mathcal{P}^{B}_{D}$) \cite{trh1} & MINRES($\widetilde{\mathcal{P}}$) \cite{jpm3} & GMRES($\widehat{\mathcal{P}}_{I}$) &  $\bold{GMRES(\widehat{\mathcal{P}}_{2})}$\\
\hline
2306     & -{}- & 81(0.82)   & 180(0.62)   & $\bold{97(0.42)}$\\
8706     & -{}- &103 (4.70) & 256(7.71)   & $\bold{124(3.63)}$\\
33794   & -{}- & 147(31.6) & 275(21.4)   & $\bold{120(9.36)}$\\
133122 & -{}- & 209(269)  & 281(75.5)   & $\bold{125(36.0)}$\\
\hline
\end{tabular}
\label{table4}
\end{table}

\begin{table}[!t]
\renewcommand\arraystretch{1.5}
\scriptsize
\centering
\caption{\protect\\Performance of the $\bold{GMRES(\widehat{\mathcal{P}}_{2})}$ method with "backslash" for solving Example 1}
\begin{tabular}{c|c|c|c|c}
\hline
DoF\verb|\| $\beta$ & $10^{-2}$ & $10^{-4}$ & $10^{-6}$ &$10^{-8}$\\
\hline
2306     & 6(0.08) & 15(0.10)  & 38(0.27)   & $74(0.44)$\\
8706     & 6(0.38) & 14(0.81)  & 37(2.14)   & $91(5.56)$\\
33794   & 5(3.41) & 13(7.94)  & 35(20.3)   & $89(51.2)$\\
133122 & 5(44.2) & 12(95.4)  & 32(242)    & $82(608)$\\
\hline
\end{tabular}
\label{table5}
\end{table}

\begin{table}[!t]
\renewcommand\arraystretch{1.5}
\scriptsize
\centering
\caption{\protect\\Performance of the $\bold{GMRES(\widehat{\mathcal{P}}_{2})}$ method with tolerance $10^{-9}$ for solving Example 1}
\begin{tabular}{c|c|c|c|c}
\hline
DoF\verb|\| $\beta$ & $10^{-2}$ & $10^{-4}$ & $10^{-6}$ &$10^{-8}$\\
\hline
2306     & 13(0.12) & 27(0.12)  & 60(0.35)   & $106(0.50)$\\
8706     & 14(0.38) & 28(0.79)  & 64(1.85)   & $162(6.10)$\\
33794   & 13(0.86) & 26(1.83)  & 62(4.49)   & $153(12.8)$\\
133122 & 13(3.18) & 26(6.25)  & 63(15.8)   & $156(46.1)$\\
\hline
\end{tabular}
\label{table6}
\end{table}

For an illustration of the numerical solution, the computed state and control of the GMRES($\widehat{\mathcal{P}}_{2}$) method are given in Fig. 2. We note that the solutions of different methods for the pure Neumann boundary control problem are the same except for negligible errors. This coincides with the fact that preconditioners improve the efficiency of iterative methods without influence the solutions of linear systems. The iterative results of different methods for solving the pure Neumann boundary control problem with $\beta=10^{-2}, 10^{-4}, 10^{-6}, 10^{-8}$ are listed in Table 1-4. It can be seen from these tables that for a fixed regularization parameter value, the iteration numbers of the MINRES($\mathcal{P}^{B}_{D}$), MINRES($\widetilde{\mathcal{P}}$) and GMRES($\widehat{\mathcal{P}}_{I}$) method grow obviously as DoF increases. By contrast, the iteration number of the GMRES($\widehat{\mathcal{P}}_{2}$) method appears bounded. And it is influenced by the DoF in a much milder way. Moreover, the GMRES($\widehat{\mathcal{P}}_{2}$) method requires much less iterations and CPU time than the other methods. On the other hand,  as the regularization parameter $\beta$ decreases, the iteration number of the GMRES($\widehat{\mathcal{P}}_{2}$) method grows but it still keeps the least. Moreover, the increase is found to occur roughly in a logarithmically linear way. This is similar to the case presented in \cite{kmb1}. Besides, we conducted experiments of the proposed preconditioner where a solve with $Ke$ is done by "backslash" of MATLAB directly. As shown in Table 5, it took much more time even though the required iterations were less. It is noted that the use of AMG method in the proposed preconditioner is further approximation for the coefficient matrix compared with "backslash". We also tested the performance of the GMRES$(\widehat{\mathcal{P}}_{2})$ method with a smaller tolerance $10^{-9}$. The results listed in Table 6 exhibit similar behaviours to the results with tolerance $10^{-6}$.

\subsection{Example 2}\label{sec5-2}
Consider the pure Neumann boundary control problem (\ref{eq2-1})-(\ref{eq2-2}) with the desired state
$$
y_{d}=\left\{
\begin{array}{c}
\begin{array}{ccl}
(2x-1)^{2}(2y-1)^2,&x\leq\frac{1}{2}, y\leq\frac{1}{2},\\
0,&\text{elsewhere},
\end{array}
\end{array}
\right.
$$
which is illustrated in Fig. 3. 

\begin{figure}[!b]
  \centering
  \includegraphics[height=4.5cm]{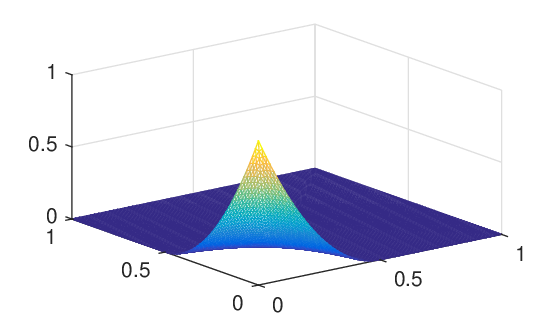}
  \caption{Desired state of Example 2.}
  \label{fig:state2}
\end{figure}

\begin{figure}[!b]
\centering
{\includegraphics[width=\textwidth]{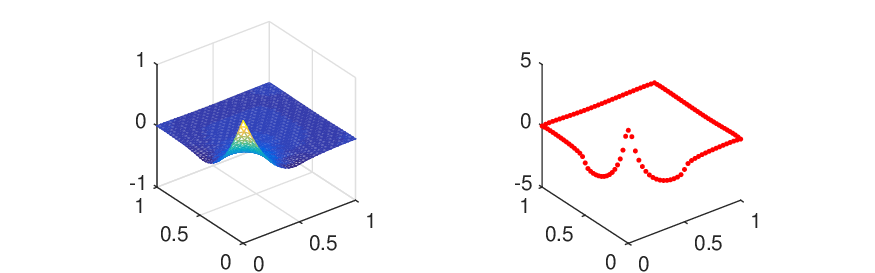}\label{fig:4}}
\caption{Computed state (left) and control (right) of the GMRES($\widehat{\mathcal{P}}_{2}$) method for Example 2 with DoF=2306 and $\beta=10^{-6}$.}
\label{fig:solution2}
\end{figure}

An illustration of the computed state and control of the GMRES($\widehat{\mathcal{P}}_{2}$) method is given in Fig. 4. The iterative results of these methods for solving the pure Neumann boundary control problem with $\beta=10^{-2}, 10^{-4}, 10^{-6},10^{-8}$ are listed in Table 7-10. We observe that the GMRES($\widehat{\mathcal{P}}_{2}$) method requires much less iterations and CPU time than the other methods for the same regularization parameter. The iteration number of the GMRES($\widehat{\mathcal{P}}_{2}$) method still appears bounded. It changes much less than the other methods as DoF increases. Besides, the iteration number of the GMRES($\widehat{\mathcal{P}}_{2}$) method still keeps the least as the regularization parameter $\beta$ becomes smaller. And the iteration increase with $\beta$ is also shown to be a nearly logarithmic increase. The results with "backslash" in Table 11 shows the influence of the AMG method on performance of the proposed preconditioner. For a smaller tolerance $10^{-9}$, Table 12 shows that the GMRES$(\widehat{\mathcal{P}}_{2})$ method has a similar performance with respect to the mesh robustness.

\begin{table}[!b]
\renewcommand\arraystretch{1.5}
\scriptsize
\centering
\caption{\protect\\Comparison of iteration numbers and CPU time for solving Example 2 with $\beta=10^{-2}$}
\begin{tabular}{c|c|c|c|c}
\hline
DoF\verb|\| Method & MINRES($\mathcal{P}^{B}_{D}$) \cite{trh1} & MINRES($\widetilde{\mathcal{P}}$) \cite{jpm3} & GMRES($\widehat{\mathcal{P}}_{I}$) &  $\bold{GMRES(\widehat{\mathcal{P}}_{2})}$\\
\hline
2306     & 39(0.19) & 77(0.76) & 82(0.28)     & $\bold{13(0.07)}$\\
8706     & 51(1.08) & 81(3.64) & 127(2.66)   & $\bold{13(0.30)}$\\
33794   & 51(4.28) & 87(19.2) & 233(16.5)   & $\bold{13(0.86)}$\\
133122 & 63(19.2) & 99(134)  & 425(143)    & $\bold{12(2.91)}$\\
\hline
\end{tabular}
\label{table7}
\end{table}

\begin{table}[!b]
\renewcommand\arraystretch{1.5}
\scriptsize
\centering
\caption{\protect\\Comparison of iteration numbers and CPU time for solving Example 2 with $\beta=10^{-4}$}
\begin{tabular}{c|c|c|c|c}
\hline
DoF\verb|\| Method & MINRES($\mathcal{P}^{B}_{D}$) \cite{trh1} & MINRES($\widetilde{\mathcal{P}}$) \cite{jpm3} & GMRES($\widehat{\mathcal{P}}_{I}$) &  $\bold{GMRES(\widehat{\mathcal{P}}_{2})}$\\
\hline
2306     & 117(0.60) & 73(0.73)   & 66(0.20)     & $\bold{22(0.11)}$\\
8706     & 139(2.91) & 87(3.97)   & 100(2.15)   & $\bold{24(0.60)}$\\
33794   & 144(11.9) & 105(23.2) & 169(10.7)   & $\bold{22(1.44)}$\\
133122 & 177(53.4) & 123(166)  & 302(85.1)   & $\bold{24(5.86)}$\\
\hline
\end{tabular}
\label{table8}
\end{table}

\begin{table}[!b]
\renewcommand\arraystretch{1.5}
\scriptsize
\centering
\caption{\protect\\Comparison of iteration numbers and CPU time for solving Example 2 with $\beta=10^{-6}$}
\begin{tabular}{c|c|c|c|c}
\hline
DoF\verb|\| Method & MINRES($\mathcal{P}^{B}_{D}$) \cite{trh1} & MINRES($\widetilde{\mathcal{P}}$) \cite{jpm3} & GMRES($\widehat{\mathcal{P}}_{I}$) &  $\bold{GMRES(\widehat{\mathcal{P}}_{2})}$\\
\hline
2306     & -{}- & 73(0.71)   & 88(0.27)     & $\bold{47(0.42)}$\\
8706     & -{}- & 95(4.16)   & 111(2.28)   & $\bold{50(1.33)}$\\
33794   & -{}- & 129(28.5) & 182(11.7)   & $\bold{49(3.33)}$\\
133122 & -{}- & 171(228)  & 313(89.6)   & $\bold{51(12.4)}$\\
\hline
\end{tabular}
\label{table9}
\end{table}

\begin{table}[!b]
\renewcommand\arraystretch{1.5}
\scriptsize
\centering
\caption{\protect\\Comparison of iteration numbers and CPU time for solving Example 2 with $\beta=10^{-8}$}
\begin{tabular}{c|c|c|c|c}
\hline
DoF\verb|\| Method & MINRES($\mathcal{P}^{B}_{D}$) \cite{trh1} & MINRES($\widetilde{\mathcal{P}}$) \cite{jpm3} & GMRES($\widehat{\mathcal{P}}_{I}$) &  $\bold{GMRES(\widehat{\mathcal{P}}_{2})}$\\
\hline
2306     & -{}- & 79(0.79)   & 181(0.64)   & $\bold{94(0.41)}$\\
8706     & -{}- & 101(4.61) & 274(8.51)   & $\bold{112(3.29)}$\\
33794   & -{}- & 137(29.4) & 318(26.7)   & $\bold{111(8.47)}$\\
133122 & -{}- & 203(260)  & 374(117)    & $\bold{119(32.8)}$\\
\hline
\end{tabular}
\label{table10}
\end{table}

\begin{table}[!t]
\renewcommand\arraystretch{1.5}
\scriptsize
\centering
\caption{\protect\\Performance of the $\bold{GMRES(\widehat{\mathcal{P}}_{2})}$ method with "backslash" for solving Example 2}
\begin{tabular}{c|c|c|c|c}
\hline
DoF\verb|\| $\beta$ & $10^{-2}$ & $10^{-4}$ & $10^{-6}$ &$10^{-8}$\\
\hline
2306     & 7(0.06) & 15(0.09)  & 35(0.22)   & $68(0.40)$\\
8706     & 7(0.41) & 15(0.84)  & 35(2.03)   & $73(4.42)$\\
33794   & 7(4.51) & 15(9.00)  & 33(18.8)   & $66(38.3)$\\
133122 & 7(58.4) & 15(117)   & 33(249)    & $65(484)$\\
\hline
\end{tabular}
\label{table11}
\end{table}

\begin{table}[!t]
\renewcommand\arraystretch{1.5}
\scriptsize
\centering
\caption{\protect\\Performance of the $\bold{GMRES(\widehat{\mathcal{P}}_{2})}$ method with tolerance $10^{-9}$ for solving Example 2}
\begin{tabular}{c|c|c|c|c}
\hline
DoF\verb|\| $\beta$ & $10^{-2}$ & $10^{-4}$ & $10^{-6}$ & $10^{-8}$\\
\hline
2306     & 14(0.08) & 28(0.13)  & 59(0.26)   & $103(0.48)$\\
8706     & 15(0.36) & 29(0.72)  & 62(1.85)   & $180(6.08)$\\
33794   & 14(0.96) & 28(1.87)  & 61(4.23)   & $203(19.1)$\\
133122 & 14(3.52) & 28(6.84)  & 62(15.6)   & $201(72.6)$\\
\hline
\end{tabular}
\label{table12}
\end{table}

\section{Conclusion}\label{conclusion}
We have presented a new preconditioning method for the saddle point problem arising from the pure Neumann boundary control problem. In this new method, we first regularized the pure Neumann problem by adding a global condition to the candidate solution of the state, which yielded consistent PDE constraints for the original problem and overcame the singularity of the stiffness matrix. Then we extended the saddle point problem to a new one based on this regularization. Furthermore, we transformed the rows of the new saddle point problem to obtain its equivalent form. With this equivalent form, we constructed a block triangular preconditioner based on an approximation of the Schur complement. We have analyzed eigenvalue properties of the preconditioned matrix and given the corresponding eigenvalue bounds. Numerical results showed that the proposed preconditioning method outperformed other methods as it required much less iteration steps and CPU time.

\section{Data Availability Statement}
The data that support the findings of this study are available from the corresponding author, upon reasonable request.

\end{document}